\theoremstyle{plain} 
\newtheorem{theorem}             {Theorem} 
\newtheorem{corollary}  [theorem]{Corollary}
\newtheorem{lemma}[theorem]{Lemma}
\newtheorem{conjecture}[theorem]{Conjecture}
\theoremstyle{definition}
\newtheorem*{definition}{Definition}
\newtheorem{problem}{Problem}
\theoremstyle{remark}
\DeclareMathOperator{\lcm}{lcm}
\def\modd#1 #2{#1\ \mbox{\rm (mod}\ #2\mbox{\rm )}}
\begin{document}

\author{Daniel Tsai}
\address{Nagoya University, Graduate School of Mathematics, 464-8602, Furocho, Chikusa-ku, Nagoya, Japan}
\email{shokuns@math.nagoya-u.ac.jp}
\subjclass[2010]{Primary 11A63; Secondary 11A25, 11A51}

\title{$v$-Palindromes: An Analogy to the Palindromes}

\begin{abstract}
In the year 2007, the author discovered an intriguing property of the number $198$ he saw on the license plate of a car. Namely, if we take $198$ and its reversal $891$, prime factorize each number, and sum the numbers appearing in each factorization, both sums are $18$. Such numbers are formally introduced in a short published note in 2018. These numbers are later named $v$-palindromes because they can be viewed as an analogy to the usual palindromes. In this article, we introduce the concept of a $v$-palindrome in base $b$, and prove their existence for infinitely many bases. Finally, we collect some conjectures on $v$-palindromes.
\end{abstract}
\maketitle

\section{Origin of the concept.}

As I recall, it was some time in the first half of the year 2007 when I was 15 years old. My mother and younger brother were in a video rental shop near our home in Taipei and my father and I were waiting outside the shop, standing beside our parked car. I was a bit bored and glanced at the license plate of our car, which was 0198-QB. For no clear reason, I took the number $198$ and did the following. I factorized $198=2\cdot 3^2\cdot 11$, reversed the digits of $198$, and factorized $891=3^4\cdot 11$. Then I summed the numbers appearing in each factorization: $2+3+2+11=18$ and $3+4+11=18$ respectively. So surprisingly they are equal! We also illustrate this pictorially as follows.
\begin{align}
198\quad&=\quad 2\cdot 3^2\cdot 11\quad\xmapsto{\phantom{aaaa}}\quad 2+(3+2)+11 \nonumber \\
\uparrow\phantom{!}\quad&\phantom{=}\quad \phantom{2\cdot 3^2\cdot 11}\quad\phantom{\xmapsto{\phantom{aaaa}}}\quad \phantom{2+(3+2}\parallel\nonumber\\
\text{digit reversal}&\phantom{=}\quad \phantom{2\cdot 3^2\cdot 11}\quad\phantom{\xmapsto{\phantom{aaaa}}}\quad \phantom{2+(3+2}18 \label{picture} \\
\downarrow\phantom{!}\quad&\phantom{=}\quad \phantom{2\cdot 3^2\cdot 11}\quad\phantom{\xmapsto{\phantom{aaaa}}}\quad \phantom{2+(3+2}\parallel\nonumber\\
891\quad&=\quad \phantom{2\cdot !!} 3^4\cdot 11\quad\xmapsto{\phantom{aaaa}}\quad (3+4)+11 \nonumber
\end{align}
Afterwards, I spent some time to try to show that there is an infinitude of such numbers (we define them rigorously in the next section) but could not show it.

In October 2018, I published a one-and-a-half page note \cite{tsai0} in the S\={u}gaku Seminar magazine, which is sort of like the American Mathematical Monthly of Japan, merely defining such numbers and also showing their infinitude, although I recall already knowing how to show their infinitude as early as 2015.

In March 2021, I published the paper \cite{tsai1}, proving a general theorem pertaining to such numbers, and then in June 2021, uploaded the third version of the preprint \cite{tsai2}, in which more in-depth investigations were done. In this article, we re-introduce such numbers again as well as stating some results in \cite{tsai1,tsai2}.

\section{Definition of the property.}

Recall the base $b$ representation of a natural number, where $b\geq2$ is the base. For every natural number $n$, there exist unique integers $L\geq1$ and $0\leq a_0,a_1,\ldots,a_{L-1}<b$ with $a_{L-1}\neq0$ such that
\begin{equation}\label{baseb}
  n=a_0+a_1b+\cdots+a_{L-1}b^{L-1}.
\end{equation}
Thus $L$ is the number of base $b$ digits of $n$. We also denote \eqref{baseb} as $n=(a_{L-1}\cdots a_1a_0)_b$. We define the \textit{digit reversal} in base $b$ of $n$ to be
\begin{equation}
r_b(n)=a_{L-1}+a_{L-2}b+\cdots+a_{0}b^{L-1}.
\end{equation}
For instance, $r_{10}(18)=81$ and $r_{10}(2)=r_{10}(200)=2$. We next define a function $v(n)$ to denote ``summing the numbers appearing in the factorization''.
\begin{definition}
Suppose that the prime factorization of the natural number $n$ is
\begin{equation}
  n=p^{\varepsilon_1}_1\cdots p^{\varepsilon_s}_s q_1\cdots q_t,
\end{equation}
where the $s,t\geq0$ and $\varepsilon_1,\ldots,\varepsilon_s\geq2$ are integers and the $p_1,\ldots,p_s,q_1,\ldots,q_t$ are distinct primes, then we set
\begin{equation}
  v(n)=\sum^s_{i=1}(p_i+\varepsilon_i)+\sum^t_{j=1}q_j.
\end{equation}
\end{definition}
Notice that $v(n)$ is an additive function, i.e., $v(mn)=v(m)+v(n)$ whenever $m$ and $n$ are relatively prime natural numbers. The values of $v(n)$ has been created as sequence A338038 in the On-Line Encyclopedia of Integer Sequences \cite{oeis}. We can now make the following definition.
\begin{definition}[$v$-palindrome]
Let $b\geq2$ be an integer. A natural number $n$ is a $v$-\textit{palindrome} in base $b$ if
\begin{itemize}
\item[{\rm (i)}] $b\nmid n$,
\item[{\rm (ii)}] $n\neq r_b(n)$, and
\item[{\rm (iii)}] $v(n)=v(r_b(n))$.
\end{itemize}
The set of $v$-palindromes in base $b$ is denoted by $\mathbb{V}_b$.
\end{definition}
The condition (i) is included merely for the aesthetic look of $n$ and $r_b(n)$ having the same number of digits. Condition (ii) is included for if $n=r_b(n)$, then condition (iii) holds trivially, and so nothing is surprising. The picture \eqref{picture} in the general case, with the factorizing step omitted, would be as follows.
\begin{align}
n\quad&\xmapsto{\phantom{aaaa}}\quad v(n) \nonumber \\
\uparrow\quad&\phantom{\xmapsto{\phantom{aaaa}}}\quad\phantom{aa}\parallel\nonumber\\
\text{digit reversal in base $b$}&\phantom{\xmapsto{\phantom{aaaa}}}\phantom{aa}\text{same number}\nonumber\\
\downarrow\quad&\phantom{\xmapsto{\phantom{aaaa}}}\quad\phantom{aa}\parallel\nonumber\\
r_b(n)\quad&\xmapsto{\phantom{aaaa}}\quad v(r_b(n)) \nonumber
\end{align}
Only base ten is dealt with in \cite{tsai0,tsai1,tsai2}, but generalizing the definition to a general base is easily done. The sequence of $v$-palindromes in base ten has been created as sequence A338039 in the On-Line Encyclopedia of Integer Sequences \cite{oeis}.

We explain why the $v$-palindromes can be viewed as an analogy to the usual palindromes. Recall the following definition of the usual palindromes.
\begin{definition}[palindrome]
Let $b\geq2$ be an integer. A natural number $n$ is a \emph{palindrome} in base $b$ if $n=r_b(n)$.
\end{definition}
The definition of $v$-palindromes can be obtained from that of the usual palindromes by applying $v$ to the equality $n=r_b(n)$ and then including the conditions (i) and (ii). The reason for including these conditions being as explained earlier. The mere application of $v$ to the equality $n=r_b(n)$ causes palindromes and $v$-palindromes to behave very differently. Although the function $v$ is the specific function as defined above, there is nothing special about it, and it is equally conceivable to use any function $w\colon\mathbb{N}\to\mathbb{C}$ instead in the definition of $v$-palindromes, perhaps calling the defined kind of number $w$-palindromes.

In this article, we prove the existence of a $v$-palindrome for infinitely many bases, and at the end include a section collecting some conjectures on the $v$-palindromes.

\section{Infinitude for base ten.}

We illustrated \eqref{picture} that $198$ is a $v$-palindrome in base ten. That there are infinitely many $v$-palindromes in base ten is shown in \cite{tsai0} by specifically showing that all of the numbers
\begin{equation}\label{eq:1998}
18, 198, 1998,\ldots,
\end{equation}
with any number of nines in the middle, are $v$-palindromes in base ten. Also mentioned in \cite{tsai0} is that all of the numbers
\begin{equation}\label{eq:ten}
18,1818,181818,\ldots,
\end{equation}
with any number of $18$'s repeatedly concatenated, are $v$-palindromes in base ten. In fact the main theorem of \cite{tsai1} is inspired by the sequence \eqref{eq:ten}. Both families \eqref{eq:1998} and \eqref{eq:ten} seem to be ``based on'' $18$. In fact they are subsets of the following more general family.
\begin{theorem}[\cite{tsai3}, Theorem 3]
  If $\rho$ is a palindrome in base ten consisting entirely of the digits $0$ and $1$, then $18\rho$ is a $v$-palindrome in base ten.
\end{theorem}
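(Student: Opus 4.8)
The plan is first to read $18\rho$ as the product $18\cdot\rho$, so that the families \eqref{eq:1998} and \eqref{eq:ten} correspond to $\rho$ running over the repunits $1,11,111,\dots$ and over $1,101,10101,\dots$. Write $\rho=(c_{m-1}\cdots c_1c_0)_{10}$ with each $c_i\in\{0,1\}$; since $\rho$ is a palindrome with nonzero leading digit we have $c_i=c_{m-1-i}$ and $c_0=c_{m-1}=1$, so in particular $\rho$ is odd and coprime to $10$. The engine of the whole argument will be a digit-level description of $18\rho$, from which all three defining conditions fall out.

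First I would compute the base-ten digits of $18\cdot\rho$ directly. Since $18\cdot10^{i}$ places an $8$ in position $i$ and a $1$ in position $i+1$, summing over the $i$ with $c_i=1$ gives a provisional digit $8c_j+c_{j-1}$ in position $j$ (with $c_{-1}=c_m=0$). The point is that $8c_j+c_{j-1}\le 9$ always, so there are \emph{no carries}: the digits of $18\rho$ are exactly $d_j=8c_j+c_{j-1}$ for $0\le j\le m$, with leading digit $d_m=c_{m-1}=1$. This is precisely where the number $18$ matters, the inequality $8+1\le 9$ being what prevents carrying. From $d_0=8$ we get condition (i) at once, since $18\rho$ then ends in $8$ and is not divisible by $10$.

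Next I would identify the reversal. Running the same carry-free computation for $81\cdot\rho$ gives digits $e_j=c_j+8c_{j-1}$. Comparing with the reversed digit string of $18\rho$, whose position-$j$ digit is $d_{m-j}=8c_{m-j}+c_{m-j-1}$, the palindrome relations $c_{m-j}=c_{j-1}$ and $c_{m-j-1}=c_j$ turn $d_{m-j}$ into exactly $e_j$. Hence $r_{10}(18\rho)=81\rho$, and since $18\ne 81$ while $\rho\ge 1$ this also yields condition (ii). I would flag this identity as the main obstacle: it is the one step that genuinely uses both the digit restriction (to kill carries) and the palindrome hypothesis (to realize the reversal as $81\rho$).

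It remains to verify (iii), that $v(18\rho)=v(81\rho)$, and here the only care needed is that $\rho$ may itself be divisible by $3$ (e.g.\ $111=3\cdot 37$), so the powers of $3$ coming from $18=2\cdot3^2$ or $81=3^4$ merge with those of $\rho$. Writing $\rho=3^{a}\mu$ with $3\nmid\mu$, and using that $\mu$ is coprime to $6$, additivity of $v$ gives $18\rho=2\cdot 3^{a+2}\cdot\mu$ and $81\rho=3^{a+4}\cdot\mu$, whence $v(18\rho)=2+(3+(a+2))+v(\mu)$ and $v(81\rho)=(3+(a+4))+v(\mu)$; both equal $a+7+v(\mu)$. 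The underlying reason $18$ works is the arithmetic identity $2+3+k=3+(k+2)$, i.e.\ $v(2\cdot 3^{k})=v(3^{k+2})$ for all $k\ge 2$, which is the coincidence $v(18)=v(81)$ made robust under multiplication by extra powers of $3$.
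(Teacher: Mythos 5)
Your proof is correct and complete: the carry-free digit computation (valid because $8c_j+c_{j-1}\leq 9$) shows that $18\rho$ ends in $8$ and that $r_{10}(18\rho)=81\rho$ via the palindrome relations, and the factorization $18\rho=2\cdot 3^{a+2}\mu$, $81\rho=3^{a+4}\mu$ with $(\mu,6)=1$ correctly handles the case $3\mid\rho$ and gives $v(18\rho)=v(81\rho)=a+7+v(\mu)$. The paper itself offers no proof of this statement, deferring entirely to \cite{tsai3}, so there is nothing to compare against; your argument is the natural one and I find no gaps.
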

This theorem relates the usual palindromes with the $v$-palindromes. If we take $\rho$ to be a repunit, then we recover \eqref{eq:1998}. If we take $\rho$ to have alternating digits of $0$ and $1$, then we recover \eqref{eq:ten}. If we take $\rho$ to have only the first and last digits being $1$ and at least one $0$ in between, then we deduce the family
\begin{equation}
1818,18018,180018,\ldots,
\end{equation}
with any number of $0$'s in between two $18$'s, of $v$-palindromes in base ten.

Thus the infinitude of $v$-palindromes in base ten is very well-established. In \S\ref{sec:main}, we shall establish the infinitude of $v$-palindromes for infinitely many bases.

\section{A periodic phenomenon.}\label{sec:aperiodic}

We state the main theorem of \cite{tsai1}, which describes a periodic phenomenon involving $v$-palindromes and repeated concatenations in base ten, for a general base. The proof in \cite{tsai1} is only for base ten, but is easily adapted for a general base. Before that, we first give a notation for repeated concatenations.
\begin{definition}
Suppose that $n=(a_{L-1}\ldots a_1a_0)_b$ is a base $b$ representation and $k\geq1$ is an integer, then we denote the repeated concatenation of the base $b$ digits of $n$ consisting of $k$ copies of $n$ by $n(k)_b$. That is,
\begin{align}
  n(k)_b & =(\underbrace{a_{L-1}\cdots a_1a_0a_{L-1}\cdots a_1a_0\cdots\cdots a_{L-1}\cdots a_1a_0}_\text{$k$ copies of $a_{L-1}\cdots a_1a_0$})_b \nonumber \\
  & =n(1+b^L+\cdots + b^{(k-1)L})=n\cdot\frac{1-b^{Lk}}{1-b^L}. \label{wd}
\end{align}
\end{definition}
For instance, $18(3)_{10}=181818$ and $201(4)_{10}=201201201201$. We can now state the main theorem of \cite{tsai1} for a general base as follows.
\begin{theorem}[{\cite[Theorem 1 for a general base]{tsai1}}]\label{tsai1T}
Let $b\geq2$ be an integer. For every natural number $n$ with $b\nmid n$ and $n\neq r_b(n)$, there exists an integer $\omega\ge1$ such that for all integers $k\ge1$,
\begin{equation}
n(k)_b\in\mathbb{V}_b\quad\text{if and only if}\quad n(k+\omega)_b\in\mathbb{V}_b.
\end{equation}
\end{theorem}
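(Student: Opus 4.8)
The plan is to reduce the membership $n(k)_b\in\mathbb{V}_b$ to a single arithmetic condition and then prove that this condition is periodic in $k$. Write $L$ for the number of base $b$ digits of $n$, set $B=b^{L}$ and $R_k=\frac{B^{k}-1}{B-1}=1+B+\cdots+B^{k-1}$, so that by \eqref{wd} we have $n(k)_b=nR_k$. Since $b\nmid n$ the last digit of $n$ is nonzero, so $r_b(n)$ also has exactly $L$ digits, and reversing the $Lk$ digits of $n(k)_b$ (which form $k$ copies of the digit block of $n$) produces $k$ copies of the digit block of $r_b(n)$; thus $r_b(n(k)_b)=r_b(n)R_k$. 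Because $R_k\equiv 1\pmod b$, we get $n(k)_b\equiv n\not\equiv 0\pmod b$, so (i) holds for every $k$, and (ii) reduces to $n\neq r_b(n)$, which holds by hypothesis. Hence the only condition depending on $k$ is (iii), and setting $D(k)=v(nR_k)-v(r_b(n)R_k)$ the theorem is equivalent to the assertion that the truth value of $D(k)=0$ is periodic in $k$.

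Next I would exploit the additivity of $v$ through its prime expansion. For any natural number $N$ one has $v(N)=\sum_{p}F_p(\nu_p(N))$, where $\nu_p$ is the $p$-adic valuation and $F_p(0)=0$, $F_p(1)=p$, $F_p(e)=p+e$ for $e\ge 2$. Applying this to $N=nR_k$ and $N=r_b(n)R_k$ and using $\nu_p(nR_k)=\nu_p(n)+\nu_p(R_k)$ gives $D(k)=\sum_p\bigl(F_p(\nu_p(n)+\nu_p(R_k))-F_p(\nu_p(r_b(n))+\nu_p(R_k))\bigr)$. For every prime $p$ dividing neither $n$ nor $r_b(n)$ the two arguments coincide and the summand vanishes, so $D(k)$ is a \emph{finite} sum over the fixed set $P=\{p:p\mid n\,r_b(n)\}$.

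The third step is to show that each surviving summand depends on $k$ only through a bounded quantity. Fix $p\in P$ and put $a=\nu_p(n)$, $a'=\nu_p(r_b(n))$. As soon as $e:=\nu_p(R_k)\ge 2$, both arguments are $\ge 2$, so $F_p(a+e)-F_p(a'+e)=(p+a+e)-(p+a'+e)=a-a'$ is independent of $e$. Consequently the $p$-th summand of $D(k)$ is a function of $\min(\nu_p(R_k),2)\in\{0,1,2\}$ alone. It therefore suffices to prove that for each $p\in P$ the sequence $k\mapsto\min(\nu_p(R_k),2)$ is purely periodic; then $D(k)$ is periodic with period $\omega=\lcm_{p\in P}\pi_p$, where $\pi_p$ is the individual period, and this $\omega$ is the one sought.

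Finally I would establish this last periodicity, which I expect to be the main obstacle precisely because it must be \emph{pure} (not merely eventual) periodicity valid for every $k\ge 1$. Since $\min(\nu_p(R_k),2)$ is determined by $R_k\bmod p^{2}$, it is enough to show $k\mapsto R_k\bmod p^{2}$ is purely periodic. If $p\mid b$, then $p\mid B$ and $R_k\equiv 1\pmod p$ for all $k$, so $\nu_p(R_k)=0$ throughout and the regime is the constant $0$. If $p\nmid b$, then $\gcd(B,p^{2})=1$, and the recursion $R_{k+1}=1+BR_k$ realises the step $R_k\mapsto R_{k+1}$ as the affine map $x\mapsto 1+Bx$ on $\mathbb{Z}/p^{2}\mathbb{Z}$; this map is a bijection, its inverse being $x\mapsto B^{-1}(x-1)$, so the sequence $(R_k\bmod p^{2})_{k\ge 1}$ is the orbit of a point under a permutation of a finite set and hence purely periodic (the period $\pi_p$ is even effectively computable from the order of $B$ modulo $p^{2}$). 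Collecting the periods over the finite set $P$ yields $\omega$ with $D(k)=D(k+\omega)$ for all $k\ge 1$, which gives $n(k)_b\in\mathbb{V}_b\iff n(k+\omega)_b\in\mathbb{V}_b$ and completes the proof.
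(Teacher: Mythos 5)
Your proof is correct and complete. Note that the paper itself does not prove Theorem \ref{tsai1T}: it quotes the result from the reference and merely remarks that the base-ten proof there adapts to a general base, so there is no in-paper argument to compare against line by line. Your reduction is the natural one and, as far as the cited source goes, essentially the intended strategy: use $n(k)_b=nR_k$ and $r_b(n(k)_b)=r_b(n)R_k$ (valid because $b\nmid n$ forces $r_b(n)$ to have $L$ digits), observe that conditions (i) and (ii) hold for all $k$, and reduce (iii) to the periodicity of the finitely many quantities $\nu_p(R_k)$ for $p\mid n\,r_b(n)$. Two details of your write-up are worth highlighting as genuinely good choices. First, the capping observation that the $p$-th summand depends only on $\min(\nu_p(R_k),2)$ — because $F_p(m)=p+m$ once $m\ge 2$ — lets you work modulo $p^2$ and avoid any lifting-the-exponent analysis of $\nu_p(R_k)$ itself, which need not be bounded. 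Second, realizing $R_{k+1}=1+BR_k$ as iteration of the affine bijection $x\mapsto 1+Bx$ on $\mathbb{Z}/p^2\mathbb{Z}$ (when $p\nmid b$) gives \emph{pure} periodicity for free, exactly what the theorem's ``for all $k\ge1$'' requires; the case $p\mid b$ is correctly dispatched since then $R_k\equiv1\pmod p$. Taking $\omega=\lcm_{p}\pi_p$ over the finite prime set finishes the argument. I see no gaps.
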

Based on this theorem, we can make the following definitions.
\begin{definition}
The smallest possible $\omega$ in the above theorem is denoted by $\omega_0(n)_b$. If the base $b$ digits of $n$ can be repeatedly concatenated to form a $v$-palindrome in base $b$, i.e., if there exists an integer $k\geq1$ such that $n(k)_b\in \mathbb{V}_b$, then the smallest $k$ is denoted by $c(k)_b$; otherwise we set $c(n)_b=\infty$.
\end{definition}
The sequence of numbers $n$ such that $c(n)_{10}<\infty$ has been created as sequence A338371 in the On-Line Encyclopedia of Integer Sequences \cite{oeis}. Hence there remains the problem of finding $\omega_0(n)_b$ and $c(n)_b$. In fact, \cite{tsai2} completely settles this problem by associating to each $n$ a periodic function $\mathbb{Z}\to\{0,1\}$ which we describe in the next section.

\section{Associated periodic function.}\label{sec:associated}

Fix a base $b\geq2$ and a natural number $n$ with $b\nmid n$ and $n\neq r_b(n)$ throughout this section. To have a clearer picture of the periodic phenomenon illustrated in Theorem \ref{tsai1T}, we define the function $I^n_b\colon \mathbb{N}\to\{0,1\}$ by setting
\begin{equation}
  I^n_b(k)=\begin{cases}
    0\quad\text{if $n(k)_b\notin\mathbb{V}_b$}\\
    1\quad\text{if $n(k)_b\in\mathbb{V}_b$}.
  \end{cases}
\end{equation}
Then $I^n_b$ is a periodic function. It therefore has a unique periodic extension $I^n_b\colon\mathbb{Z}\to\{0,1\}$ which we give the same notation. \cite[Theorem 5.8]{tsai2} says that $I^n_b$ can be expressed as a linear combination, when $b=10$, but the same holds for a general base. We first give notation for certain functions used to form the linear combination.
\begin{definition}
For a natural number $a$, denote by $I_a\colon\mathbb{Z}\to\{0,1\}$ the function defined by
\begin{equation}
  I_a(k)=\begin{cases}
    0\quad\text{if $a\nmid k$}\\
    1\quad\text{if $a\mid k$}.
  \end{cases}
\end{equation}
That is, $I_a$ is the indicator function of $a\mathbb{Z}$ in $\mathbb{Z}$.
\end{definition}
We can now state the linear combination as follows.
\begin{theorem}[\cite{tsai2}, Theorem 5.8 for a general base]
The function $I^n_b$ can be expressed in the form
\begin{equation}\label{eq:expr}
I^n_b=\lambda_1 I_{a_1}+\lambda_2 I_{a_2}+\cdots+\lambda_u I_{a_u},
\end{equation}
where the $u\geq0$, $1\leq a_1<a_2<\cdots<a_u$, and $\lambda_1,\lambda_2,\ldots,\lambda_u\neq0$ are integers. Moreover, the expression is unique.
\end{theorem}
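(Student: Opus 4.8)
The plan is to split the statement into an existence part and a uniqueness part, and to recognize at the outset that the whole assertion concerns the $\mathbb{Z}$-module generated by the indicator functions $I_a$ inside the space of periodic integer-valued functions on $\mathbb{Z}$. Since $n(k)_b=n\cdot R_k$ and $r_b(n(k)_b)=r_b(n)\cdot R_k$ with $R_k=(b^{Lk}-1)/(b^L-1)$ (here $L$ is the number of base $b$ digits of $n$), and since conditions (i) and (ii) hold automatically for every $k$ (one has $R_k\equiv1\pmod b$, so $\gcd(R_k,b)=1$ and $b\nmid nR_k$, while $n\neq r_b(n)$ gives $nR_k\neq r_b(n)R_k$), one gets $I^n_b(k)=1$ exactly when $v(nR_k)=v(r_b(n)R_k)$. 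Thus everything reduces to understanding the $k$-dependence of the single quantity $v(nR_k)-v(r_b(n)R_k)$.

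The key reduction, and the main obstacle, is to prove that this difference is governed by only finitely much divisibility data, i.e.\ that there is an integer $N\ge1$ for which $I^n_b(k)$ depends only on $\gcd(k,N)$. To establish this I would use that $v$ is additive on coprime arguments and write $v(m)=\sum_{p\mid m}p+\sum_{p:\,v_p(m)\ge2}v_p(m)$, where $v_p$ denotes the $p$-adic valuation. Primes $p\nmid n\,r_b(n)$ that divide $R_k$ contribute the identical valuation $v_p(R_k)$ to $nR_k$ and to $r_b(n)R_k$, hence cancel in the difference. For the finitely many primes $p\mid n\,r_b(n)$ one has $v_p(nR_k)=v_p(n)+v_p(R_k)$ and $v_p(r_b(n)R_k)=v_p(r_b(n))+v_p(R_k)$; the crucial point is that once $v_p(R_k)$ is large both valuations exceed $1$ and the large common term $v_p(R_k)$ cancels, leaving the constant $v_p(n)-v_p(r_b(n))$. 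All residual $k$-dependence then sits in (a) whether $p\mid R_k$, which by an order computation is the condition $\operatorname{ord}_p(b^L)\mid k$ (or $p\mid k$ when $p\mid b^L-1$), and (b) the finitely many threshold conditions $p^m\mid k$ at which a valuation crosses from $1$ to $\ge2$, controlled by a lifting-the-exponent estimate for $v_p(R_k)$. Each of these is a condition of the form $d\mid k$, so the difference, and therefore $I^n_b$, factors through $\gcd(k,N)$ with $N$ the least common multiple of the finitely many moduli that arise; this simultaneously reproves the periodicity of Theorem~\ref{tsai1T}. This number-theoretic bookkeeping is precisely the content supplied by the earlier development of \cite{tsai2}, and it is where essentially all the work lies.

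Granting that $I^n_b(k)=h(\gcd(k,N))$ for some function $h$ on the divisors of $N$, existence becomes a formal M\"obius inversion. Because $a\mid N$ forces $a\mid k\Leftrightarrow a\mid\gcd(k,N)$, I would seek coefficients $\lambda_a$ $(a\mid N)$ solving $\sum_{a\mid d}\lambda_a=h(d)$ for every $d\mid N$; inversion over the divisor lattice yields $\lambda_a=\sum_{d\mid a}\mu(a/d)\,h(d)$, which is an integer since $I^n_b$ is integer-valued. One then checks $\sum_{a\mid N}\lambda_a I_a(k)=\sum_{a\mid\gcd(k,N)}\lambda_a=h(\gcd(k,N))=I^n_b(k)$, and discarding the zero coefficients and reindexing $1\le a_1<\cdots<a_u$ puts the expression in the required form.

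For uniqueness it suffices to show that the family $\{I_a:a\ge1\}$ is linearly independent, since then two expressions of the stated shape would differ by a vanishing linear combination with nonzero coefficients, which is impossible. Given finitely many $a_1<\cdots<a_u$ with $\sum_i\lambda_i I_{a_i}=0$, I would evaluate at $k=a_j$: the coefficient matrix $M_{ji}=[\,a_i\mid a_j\,]$ has $1$'s on the diagonal and vanishes whenever $a_i>a_j$, hence is lower triangular with unit determinant, forcing every $\lambda_i=0$. I expect the divisibility analysis of the second paragraph to be the only genuinely hard step; the inversion and the independence argument are routine.
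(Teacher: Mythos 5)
The paper does not actually prove this theorem: it is imported wholesale from \cite{tsai2} (``Theorem 5.8 for a general base''), so there is no in-text argument to compare against, and your proposal has to stand on its own as a reconstruction of what that reference must do. On its merits, your outline is sound and its architecture is the natural one. The reduction $n(k)_b=nR_k$, $r_b(n(k)_b)=r_b(n)R_k$ with $R_k=(b^{Lk}-1)/(b^L-1)$ is correct (the reversal identity needs $a_0\neq 0$, which is exactly condition (i) on $n$, and conditions (i)--(ii) for $n(k)_b$ do propagate as you say since $R_k\equiv 1\pmod b$). The heart of the matter is, as you identify, that $v(nR_k)-v(r_b(n)R_k)$ depends on $k$ only through finitely many divisibility conditions; your per-prime analysis is right, though stated a bit loosely in one place: when $p$ divides exactly one of $n$, $r_b(n)$, the term $v_p(R_k)$ does \emph{not} simply cancel, and the per-prime contribution to the difference genuinely changes across the three regimes $v_p(R_k)=0$, $=1$, $\geq 2$; what saves you is that each regime is cut out by a condition of the form $d\mid k$ (via $\operatorname{ord}_p(b^L)\mid k$ and lifting the exponent, with the usual separate treatment of $p=2$, which you should not omit). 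Once $I^n_b(k)=h(\gcd(k,N))$ is in hand, your M\"obius inversion over the divisor lattice of $N$ and the triangularity argument for linear independence of $\{I_a\}$ are both correct and complete the existence and uniqueness claims. In short: the proposal is a legitimate self-contained route to the theorem, and its only real debts are the LTE bookkeeping details that the paper itself outsources to \cite{tsai2}.
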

Having expressed the function $I^n_b$ in the form \eqref{eq:expr}, we have the following for finding $\omega_0(n)_b$ and $c(n)_b$.
\begin{theorem}[\cite{tsai1}, Corollaries 6.5 and 7.2 for a general base]
The smallest period $\omega_0(n)_b$ and $c(n)_b$ can be found from the expression \eqref{eq:expr} by
\begin{align}
  \omega_0(n)_b&=\lcm\{a_1,a_2,\ldots,a_u\},\\
  c(n)_b&=\inf\{a_1,a_2,\ldots,a_u\}=\begin{cases}
  \infty\quad\text{if $u=0$}\\
    a_1\quad\text{if $u\geq1$}.
  \end{cases}
\end{align}
\end{theorem}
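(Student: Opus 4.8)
The plan is to read both quantities directly off the expression \eqref{eq:expr}, using that $\omega_0(n)_b$ is by definition the least period of the $\{0,1\}$-valued function $I^n_b$, and that $c(n)_b$ is the least $k\geq 1$ with $I^n_b(k)=1$. The two elementary facts I would exploit are that each $I_a$ has least period exactly $a$, and that $I_a(k)=0$ whenever $0<k<a$.

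For the period, first I would note that $\ell:=\lcm\{a_1,\dots,a_u\}$ is a period of $I^n_b$: since $a_i\mid\ell$, each summand $I_{a_i}$ has period $\ell$, hence so does their linear combination. It then suffices to show that every period $p$ of $I^n_b$ is divisible by each $a_i$, for then $\ell\mid p$; together with $\ell$ itself being a period this forces $\omega_0(n)_b=\ell$. I would prove this divisibility by induction on $u$, peeling off the largest modulus. Fix a common period $N=\lcm(\ell,p)$ and pass to $\mathbb{Z}/N\mathbb{Z}$, where the discrete Fourier coefficient of $I_a$ (for $a\mid N$) at frequency $\chi$ equals $\tfrac{N}{a}$ when $\tfrac{N}{a}\mid\chi$ and $0$ otherwise. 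Evaluating $\widehat{I^n_b}$ at the frequency $\chi=N/a_u$, the condition $\tfrac{N}{a_j}\mid\tfrac{N}{a_u}$ is equivalent to $a_u\mid a_j$, which by maximality of $a_u$ holds only for $j=u$; hence $\widehat{I^n_b}(N/a_u)=\lambda_u\,N/a_u\neq0$. Since $p$ is a period, this surviving frequency must satisfy $e^{2\pi i (N/a_u)p/N}=1$, i.e. $a_u\mid p$. Consequently $I_{a_u}$ has period $p$, so $I^n_b-\lambda_u I_{a_u}=\sum_{i<u}\lambda_i I_{a_i}$ also has period $p$, and the induction hypothesis yields $a_i\mid p$ for all $i<u$, completing the step.

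For $c(n)_b$, if $u=0$ then $I^n_b\equiv 0$ and no concatenation lies in $\mathbb{V}_b$, giving $c(n)_b=\infty$. If $u\geq 1$, then for each $k$ with $1\leq k<a_1$ we have $a_i\nmid k$ for every $i$ (as $0<k<a_1\leq a_i$), so $I^n_b(k)=0$ and no such concatenation is a $v$-palindrome. At $k=a_1$ only the first term survives, because $a_i\nmid a_1$ for $i\geq 2$, so $I^n_b(a_1)=\lambda_1$. As $I^n_b$ takes values in $\{0,1\}$ and $\lambda_1\neq0$, this forces $\lambda_1=1$ and $I^n_b(a_1)=1$; hence $n(a_1)_b\in\mathbb{V}_b$ and $a_1$ is the least such $k$, i.e. $c(n)_b=a_1=\inf\{a_1,\dots,a_u\}$.

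The main obstacle is the minimality half of the period statement, namely showing that every period is a common multiple of all the $a_i$, since a priori cancellation among the shifted summands could conceivably produce a shorter period. The device that defeats this is isolating the top frequency $N/a_u$: maximality of $a_u$ guarantees that it receives a contribution from the single coefficient $\lambda_u$ and from no other term, so no cancellation can occur there, and the argument then propagates downward by stripping off $\lambda_u I_{a_u}$ and inducting. Everything else is bookkeeping: that $\ell$ is a period, and the elementary vanishing $I_{a_i}(k)=0$ for $0<k<a_i$ that pins down $c(n)_b$.
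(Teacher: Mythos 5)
Your proof is correct, but note that the paper itself offers no proof of this statement: it is quoted from the author's earlier work (\cite{tsai1}, Corollaries 6.5 and 7.2, and \cite{tsai2}), so there is no internal argument to compare yours against. Taken on its own terms, your reconstruction is sound. The easy halves (that $\lcm\{a_1,\dots,a_u\}$ is a period, and that $I^n_b(k)=0$ for $1\leq k<a_1$ while $I^n_b(a_1)=\lambda_1$, which the $\{0,1\}$-valuedness forces to equal $1$) are handled correctly, and the genuinely delicate point --- that no period shorter than the lcm can arise from cancellation among the $\lambda_iI_{a_i}$ --- is dispatched cleanly by your discrete Fourier argument: the coefficient at frequency $N/a_u$ receives a contribution only from the top modulus because $(N/a_j)\mid(N/a_u)$ is equivalent to $a_u\mid a_j$, which the ordering $a_1<\cdots<a_u$ rules out for $j<u$; hence $\widehat{I^n_b}(N/a_u)=\lambda_uN/a_u\neq0$ and any period $p$ must satisfy $a_u\mid p$, after which stripping off $\lambda_uI_{a_u}$ and inducting is legitimate. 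One could reach the same conclusion more combinatorially by invoking the uniqueness clause of the preceding theorem (a period $p$ yields a second expression of the form \eqref{eq:expr} built from the $\gcd$-closure of the $a_i$ with $p$, which uniqueness then constrains), but your spectral route is self-contained and arguably cleaner. The only cosmetic caveat is that you should say explicitly that periods of $I^n_b$ on $\mathbb{N}$ correspond to periods of its extension to $\mathbb{Z}$, so that working modulo $N=\lcm(\ell,p)$ is justified; this is immediate from the paper's remark that the periodic extension is unique.
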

This infimum is thought of as that in the extended real number system.

We did not say how to express $I^n_b$ in the form \eqref{eq:expr}. A definite procedure for doing this, for $b=10$, is described in \cite{tsai1}, which is easily adapted for a general base.

\section{One implies infinitely many.}

We show, using the ideas in \S\ref{sec:aperiodic} and \S\ref{sec:associated}, that if there exists a $v$-palindrome in base $b$, then there exist infinitely many.

\begin{theorem}\label{thm:inf}
Let $b\geq2$ be an integer. If there exists a $v$-palindrome in base $b$, then there exist infinitely many $v$-palindromes in base $b$.
\end{theorem}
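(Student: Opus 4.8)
The plan is to take a single $v$-palindrome $n$ in base $b$ and use Theorem \ref{tsai1T} to manufacture infinitely many more by repeatedly concatenating the digits of $n$. First I would observe that if $n\in\mathbb{V}_b$, then conditions (i) and (ii) in the definition of a $v$-palindrome are exactly $b\nmid n$ and $n\neq r_b(n)$, which are precisely the hypotheses needed to apply Theorem \ref{tsai1T} to this $n$. Moreover, setting $k=1$ in the formula \eqref{wd} gives $n(1)_b=n$, so the starting $v$-palindrome is itself the $k=1$ member of the concatenation family of $n$; that is, $n(1)_b\in\mathbb{V}_b$.

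Next I would invoke Theorem \ref{tsai1T} to obtain an integer $\omega\geq1$ such that $n(k)_b\in\mathbb{V}_b$ if and only if $n(k+\omega)_b\in\mathbb{V}_b$ for every integer $k\geq1$. Beginning from $n(1)_b\in\mathbb{V}_b$ and iterating this equivalence, a routine induction on $m$ shows that $n(1+m\omega)_b\in\mathbb{V}_b$ for every integer $m\geq0$. This yields an infinite indexed list of members of $\mathbb{V}_b$.

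Finally I would check that these are genuinely distinct numbers, so that the list is infinite and not merely a repetition. If $n$ has $L$ base $b$ digits, then by the construction in \eqref{wd} the number $n(k)_b$ has exactly $kL$ base $b$ digits, so the numbers $n(1+m\omega)_b$ have strictly increasing numbers of digits as $m$ grows and are therefore pairwise distinct. Hence $\mathbb{V}_b$ is infinite, completing the argument.

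Since Theorem \ref{tsai1T} supplies the entire periodic mechanism, there is no deep obstacle remaining; the only points requiring care are the bookkeeping observations that the defining conditions (i) and (ii) of a $v$-palindrome coincide with the hypotheses $b\nmid n$ and $n\neq r_b(n)$ of that theorem, and that $n$ coincides with $n(1)_b$ so that it seeds its own concatenation family.
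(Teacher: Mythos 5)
Your proposal is correct and follows essentially the same route as the paper: the paper invokes the periodicity of the associated indicator function $I^n_b$ from \S\ref{sec:associated}, which is just a repackaging of Theorem \ref{tsai1T}, and likewise concludes that $n(1)_b, n(1+\omega)_b, n(1+2\omega)_b,\ldots$ are all $v$-palindromes. Your additional check that these numbers are pairwise distinct (via their strictly increasing digit counts) is a point the paper leaves implicit, so your write-up is if anything slightly more complete.
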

\begin{proof}
Suppose that $n$ is a $v$-palindrome in base $b$. According to \S\ref{sec:associated}, we have the associated function $I^n_b$. That $n$ is a $v$-palindrome in base $b$ means that $I^n_b(1)=1$. Since $I^n_b$ is periodic, say with $\omega$ as a period, we see that
\begin{equation}
I^n_b(1)=I^n_b(1+\omega)=I^n_b(1+2\omega)=\cdots.
\end{equation}
Consequently,
\begin{equation}
n(1)_b,n(1+\omega)_b,n(1+2\omega)_b,\ldots
\end{equation}
are all $v$-palindromes in base $b$.
\end{proof}

\section{Existence of \texorpdfstring{$v$}-palindromes for infinitely many bases.}\label{sec:main}

In this section we show the existence of $v$-palindromes (and therefore infinitely many $v$-palindromes in view of Theorem \ref{thm:inf}) for infinitely many bases. Everything is based on the humble fact that $v(5)=v(6)$. Since $v(n)$ is an additive function, for every integer $t\geq1$ with $(t,30)=1$, $v(5t)=v(6t)$.

Imagine that we have a base $b\geq2$ for which we would like to show that a $v$-palindrome exists. The first simple try would be to look in the two-digit numbers. That is, numbers $(ac)_b=ab+c$, where $1\leq a<c<b$ are integers. By definition, $(ac)_b$ is a $v$-palindrome in base $b$ if and only if $v((ac)_b)=v((ca)_b)$, or equivalently,
\begin{equation}
  v(ab+c)=v(cb+a).
\end{equation}
This would hold if for some integer $t\geq1$ with $(t,30)=1$,
\begin{equation}\label{eq:system}
\begin{cases}
ab+c=5t,\\
cb+a=6t,
\end{cases}
\end{equation}
simply by the observation in the previous paragraph. To summarize, we have shown the following.
\begin{lemma}
Let $b\geq2$ be an integer. If there exists an ordered triple $(a,c,t)$ of positve integers such that $a<c<b$, $(t,30)=1$ and \eqref{eq:system} holds, then the two-digit number $(ac)_b$ is a $v$-palindrome in base $b$. Hence in particular there exists a $v$-palindrome in base $b$.
\end{lemma}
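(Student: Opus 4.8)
The plan is to verify directly that the two-digit number $(ac)_b = ab + c$ satisfies the three defining conditions (i)--(iii) of a $v$-palindrome in base $b$, drawing on the system \eqref{eq:system} and the additivity of $v$. First I would record the bookkeeping: since $a$ and $c$ are positive integers with $1 \leq a < c < b$, both are legitimate base $b$ digits and $a \neq 0$ is a legitimate leading digit. Hence $(ac)_b$ genuinely has exactly the two base $b$ digits $a$ and $c$, and its digit reversal is $r_b((ac)_b) = (ca)_b = cb + a$.

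For condition (i), I would note that $ab + c \equiv c \pmod b$ with $0 < c < b$, so $b \nmid (ac)_b$. For condition (ii), observe that $(ac)_b = r_b((ac)_b)$ would force $ab + c = cb + a$, i.e.\ $(b-1)(a - c) = 0$; since $b \geq 2$ and $a < c$ this is impossible, so $(ac)_b \neq r_b((ac)_b)$.

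The heart of the argument is condition (iii), namely $v((ac)_b) = v(r_b((ac)_b))$, which by the above reduces to the claim $v(ab + c) = v(cb + a)$. Here I would invoke \eqref{eq:system} to rewrite this as $v(5t) = v(6t)$. Because $(t, 30) = 1$, the integer $t$ is coprime to each of $5$ and $6$, so the additivity of $v$ gives $v(5t) = v(5) + v(t)$ and $v(6t) = v(6) + v(t)$. Since $v(5) = 5 = v(2) + v(3) = v(6)$ --- the humble fact highlighted just before the lemma --- these two quantities coincide, establishing (iii).

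Combining the three verifications shows $(ac)_b \in \mathbb{V}_b$, and the existence of a single such triple therefore yields the existence of a $v$-palindrome in base $b$. I do not expect a genuine obstacle here: the argument is essentially a summary of the discussion preceding the lemma. The only points that require care are checking the coprimality needed to split $v$ additively (which follows from $(t,30)=1$) and confirming that the reversal of the two-digit string is indeed $(ca)_b$ rather than a number with fewer digits, a subtlety that is ruled out precisely by the hypothesis $1 \leq a$.
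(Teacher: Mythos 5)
Your proof is correct and follows essentially the same route as the paper: reduce condition (iii) to $v(5t)=v(6t)$ via \eqref{eq:system}, then use additivity of $v$ together with $(t,30)=1$ and $v(5)=v(6)$. The only difference is that you spell out the verifications of conditions (i) and (ii) and the digit bookkeeping, which the paper leaves implicit in the phrase ``$(ac)_b$ is a $v$-palindrome in base $b$ if and only if $v((ac)_b)=v((ca)_b)$.''
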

\begin{definition}
We call a triple $(a,c,t)$ in the premise of the above lemma a \emph{permissible triple} for $b$.
\end{definition}
So our strategy is to try to find permissible triples. The system \eqref{eq:system} can be written in matrix from as
\begin{equation}
\begin{pmatrix}
b & 1\\
1 & b
\end{pmatrix}
\begin{pmatrix}
a\\
c
\end{pmatrix}=
t\begin{pmatrix}
5\\
6
\end{pmatrix}.
\end{equation}
Solving we have
\begin{align}
\begin{pmatrix}
a\\
c
\end{pmatrix}&=t\begin{pmatrix}
b & 1\\
1 & b
\end{pmatrix}^{-1}\begin{pmatrix}
5\\
6
\end{pmatrix}=\frac{t}{b^2-1}
\begin{pmatrix}
b & -1\\
-1 & b
\end{pmatrix}
\begin{pmatrix}
5\\
6
\end{pmatrix}\\
&=\frac{t}{b^2-1}
\begin{pmatrix}
5b-6\\
-5+6b
\end{pmatrix}=
\begin{pmatrix}
\frac{t(5b-6)}{b^2-1}\\
\frac{t(-5+6b)}{b^2-1}
\end{pmatrix}.
\end{align}
We write them separately as
\begin{equation}\label{eq:frac}
  a=\frac{t(5b-6)}{b^2-1},\quad c=\frac{t(-5+6b)}{b^2-1},
\end{equation}
from which we also see that $0<a<c$. Hence we have the following lemma.
\begin{lemma}
Let $b\geq2$ be an integer. For every integer $t\geq1$, there exist unique rational numbers $a,c\in\mathbb{Q}$ such that \eqref{eq:system} holds, and they are given by \eqref{eq:frac}. Moreover, $0<a<c$.
\end{lemma}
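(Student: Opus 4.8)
The plan is purely linear-algebraic, and in fact the preceding display has already carried out the substantive computation. I would begin by recording \eqref{eq:system} in the matrix form
\[
\begin{pmatrix} b & 1 \\ 1 & b \end{pmatrix}\begin{pmatrix} a \\ c \end{pmatrix} = t\begin{pmatrix} 5 \\ 6 \end{pmatrix},
\]
and observe that the coefficient matrix has determinant $b^2-1$. The crucial point is that the hypothesis $b\geq 2$ forces $b^2-1\geq 3\neq 0$, so the matrix is invertible over $\mathbb{Q}$. Existence and uniqueness of a solution $(a,c)\in\mathbb{Q}^2$ then follow immediately, and multiplying through by the inverse yields exactly the closed forms \eqref{eq:frac}; since $b$ and $t$ are integers and $b^2-1\neq 0$, these values are rational.

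For the inequalities $0<a<c$, I would factor out the manifestly positive quantities $t\geq 1$ and $b^2-1>0$. Positivity of $a$ then reduces to $5b-6>0$, which holds because $b\geq 2$. For the strict inequality $a<c$ it is cleanest to subtract directly: using $b^2-1=(b-1)(b+1)$,
\[
c-a=\frac{t\bigl((6b-5)-(5b-6)\bigr)}{b^2-1}=\frac{t(b+1)}{(b-1)(b+1)}=\frac{t}{b-1}>0,
\]
which completes the claim.

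There is no genuine obstacle here: the statement is routine once the determinant is seen to be nonzero. The only point demanding a little care is that both conclusions actually use the hypothesis $b\geq 2$ rather than merely $b\geq 1$ — the determinant vanishes at $b=1$, which would destroy uniqueness, and $a>0$ would fail for $b$ below $6/5$. I would therefore make sure the write-up invokes $b\geq 2$ explicitly at each of these two places.
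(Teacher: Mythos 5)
Your proposal is correct and follows exactly the route the paper takes: it writes \eqref{eq:system} in matrix form, notes that the coefficient matrix has nonzero determinant $b^2-1$ for $b\geq2$, inverts to obtain \eqref{eq:frac}, and then reads off $0<a<c$ from the explicit formulas. Your explicit verification $c-a=t/(b-1)>0$ and $5b-6>0$ merely spells out what the paper asserts with ``from which we also see that $0<a<c$.''
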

Hence the only possible permissible triples for $b$ are
\begin{equation}
\left(\frac{t(5b-6)}{b^2-1},\frac{t(-5+6b)}{b^2-1},t\right),
\end{equation}
for an integer $t\geq1$ with $(t,30)=1$. The only missing conditions to fulfill are
\begin{align}
\frac{t(5b-6)}{b^2-1},\frac{t(-5+6b)}{b^2-1}&\in\mathbb{Z},\label{eq:integers}\\
\frac{t(-5+6b)}{b^2-1}&<b.
\end{align}
We write
\begin{align}\label{eq:glance}
\frac{t(5b-6)}{b^2-1}&=\frac{t(5b-6)/(5b-6,b^2-1)}{(b^2-1)/(5b-6,b^2-1)},\\
\frac{t(-5+6b)}{b^2-1}&=\frac{t(-5+6b)/(-5+6b,b^2-1)}{(b^2-1)/(-5+6b,b^2-1)}.
\end{align}
Hence we see that \eqref{eq:integers} holds if and only if $t$ is a multiple of
\begin{equation}
  f(b)=\left[\frac{b^2-1}{(5b-6,b^2-1)},\frac{b^2-1}{(-5+6b,b^2-1)}\right];
\end{equation}
here we also defined the function $f(b)$ for integers $b\geq2$. Hence we have shown the following lemma.
\begin{lemma}
Let $b\geq2$ be an integer. Then the permissible triples of $b$ are precisely the triples
\begin{equation}
  \left(\frac{t(5b-6)}{b^2-1},\frac{t(-5+6b)}{b^2-1},t\right),
\end{equation}
where
\begin{equation}
t\in S(b)=\left\{t\in\mathbb{N}\colon (t,30)=1,\, f(b)\mid t,\, t<\frac{b(b^2-1)}{-5+6b}\right\};
\end{equation}
where we also defined the set-valued function $S(b)$ for integers $b\geq2$.
\end{lemma}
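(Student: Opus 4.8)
The plan is to assemble the three preceding lemmas into the claimed exact description of the permissible triples, proving the stated set equality by verifying the two inclusions. First I would unwind the definition: a triple $(a,c,t)$ is permissible for $b$ precisely when $a,c,t$ are positive integers, $a<c<b$, $(t,30)=1$, and the system \eqref{eq:system} holds. The second lemma already tells me that, for a fixed $t\geq1$, the \emph{only} rationals $a,c$ solving \eqref{eq:system} are those given by \eqref{eq:frac}, and that these automatically satisfy $0<a<c$. Hence the whole search collapses to deciding, for each $t$, whether the pair \eqref{eq:frac} additionally meets the integrality, coprimality, and size constraints; by uniqueness no further solutions can be missed.

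For the forward inclusion, suppose $(a,c,t)$ is permissible. Since it solves \eqref{eq:system}, uniqueness forces $a,c$ to be exactly the pair \eqref{eq:frac}. Because $a,c\in\mathbb{Z}$, condition \eqref{eq:integers} holds, which by the discussion preceding the lemma is equivalent to $f(b)\mid t$. The hypothesis $(t,30)=1$ carries over verbatim, and I would convert $c<b$ into a bound on $t$: writing $c=\tfrac{t(-5+6b)}{b^2-1}<b$ and clearing the positive denominator $b^2-1$ together with the positive factor $-5+6b$ (positive for every $b\geq2$) yields $t<\tfrac{b(b^2-1)}{-5+6b}$. These three facts say exactly $t\in S(b)$.

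For the reverse inclusion, I would start from $t\in S(b)$ and define $a,c$ by \eqref{eq:frac}. The condition $f(b)\mid t$ gives \eqref{eq:integers}, so $a,c\in\mathbb{Z}$; combined with $0<a<c$ from the second lemma this makes $a,c$ positive integers with $a<c$. The bound $t<\tfrac{b(b^2-1)}{-5+6b}$ reverses the computation above to give $c<b$, hence $a<c<b$; and $(t,30)=1$ is assumed. Finally the pair \eqref{eq:frac} solves \eqref{eq:system} by construction, so $(a,c,t)$ is permissible. The two inclusions together give the asserted equality.

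The computations are entirely routine, so I do not expect a genuine obstacle; the only points demanding care are the bookkeeping of the \emph{strict} inequalities (ensuring $c<b$ rather than $c\leq b$, so that the boundary value $t=\tfrac{b(b^2-1)}{-5+6b}$ is correctly excluded) and confirming that the sign of $-5+6b$ does not reverse the inequality when the denominator is cleared. The substantive content — that integrality of \emph{both} coordinates is captured by the single divisibility $f(b)\mid t$, and that the solution pair is unique — has already been supplied by the earlier lemmas, so the present statement is really a consolidation of those results.
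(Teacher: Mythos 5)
Your proposal is correct and follows essentially the same route as the paper, which proves this lemma implicitly through the preceding discussion: uniqueness of $(a,c)$ from the earlier lemma, the equivalence of \eqref{eq:integers} with $f(b)\mid t$, and the translation of $c<b$ into the bound $t<\frac{b(b^2-1)}{-5+6b}$ (valid since $b^2-1$ and $-5+6b$ are positive for $b\geq2$). Your explicit two-inclusion bookkeeping is just a more careful write-up of the same consolidation.
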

However, the above lemma does not promise that permissible triples exist, i.e., $S(b)\neq\varnothing$. However, we can get the following sufficient condition.
\begin{lemma}
Let $b\geq2$ be an integer. If
\begin{equation}
  (f(b),30)=1,\quad f(b)<\frac{b(b^2-1)}{-5+6b},
\end{equation}
then $f(b)\in S(b)$, and consequently there is a permissible triple for $b$.
\end{lemma}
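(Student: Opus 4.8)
The plan is to verify directly that $t=f(b)$ is itself a member of the set $S(b)$; once this is done, the existence of a permissible triple follows immediately from the preceding lemma, which identifies the permissible triples with the elements of $S(b)$. Thus the entire task reduces to checking, at $t=f(b)$, the three conditions that define membership in $S(b)$, namely $(t,30)=1$, $f(b)\mid t$, and $t<b(b^2-1)/(-5+6b)$.

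First I would record that $f(b)$ is a legitimate candidate, i.e.\ $f(b)\in\mathbb{N}$. This holds because $f(b)$ is defined as the least common multiple of the two positive integers $(b^2-1)/(5b-6,b^2-1)$ and $(b^2-1)/(-5+6b,b^2-1)$, and the l.c.m.\ of positive integers is again a positive integer. Next I would run through the three defining conditions of $S(b)$ in turn, specialized to $t=f(b)$. The coprimality condition becomes $(f(b),30)=1$, which is precisely the first hypothesis. The divisibility condition becomes $f(b)\mid f(b)$, which holds trivially by reflexivity of divisibility. The size condition becomes $f(b)<b(b^2-1)/(-5+6b)$, which is precisely the second hypothesis. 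Having confirmed all three, I conclude that $f(b)\in S(b)$.

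Finally, I would invoke the previous lemma: since $f(b)\in S(b)$, in particular $S(b)\neq\varnothing$, and that lemma then furnishes the permissible triple $\bigl(f(b)(5b-6)/(b^2-1),\,f(b)(-5+6b)/(b^2-1),\,f(b)\bigr)$ for $b$, as desired.

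I do not expect any genuine obstacle here. The statement is essentially a direct unwinding of the definition of $S(b)$: the two hypotheses supply exactly the two nontrivial membership conditions, while reflexivity of divisibility supplies the third at no cost. The only point that merits even a moment's care is the observation that $f(b)$ is positive and hence lies in $\mathbb{N}$, which is immediate from its description as an l.c.m.\ of positive integers. All the substantive content has already been carried out in the earlier lemmas of this section, so this final lemma functions as a clean packaging of that work into a checkable sufficient condition.
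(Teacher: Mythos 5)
Your proof is correct and follows exactly the route the paper intends: the lemma is a direct unwinding of the definition of $S(b)$, with the two hypotheses supplying the coprimality and size conditions and reflexivity of divisibility supplying $f(b)\mid f(b)$, after which the preceding lemma yields the permissible triple. No discrepancies.
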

Since $f(b)\mid b^2-1$, if $(b^2-1,30)=1$ then $(f(b),30)=1$. Hence the above lemma can be weakened to the following.
\begin{lemma}
Let $b\geq2$ be an integer. If
\begin{equation}\label{eq:conditions}
  (b^2-1,30)=1,\quad f(b)<\frac{b(b^2-1)}{-5+6b},
\end{equation}
then $f(b)\in S(b)$, and consequently there is a permissible triple for $b$.
\end{lemma}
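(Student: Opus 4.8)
The plan is to read this statement as nothing more than a weakening of the immediately preceding lemma: I would show that the lone arithmetic hypothesis $(b^2-1,30)=1$ already implies the coprimality condition $(f(b),30)=1$ appearing there, after which the desired conclusion is inherited word for word. The one fact that makes this reduction work is the divisibility $f(b)\mid b^2-1$, so I would isolate and prove that first.

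To establish $f(b)\mid b^2-1$, I would appeal directly to the definition
\begin{equation}
  f(b)=\left[\frac{b^2-1}{(5b-6,b^2-1)},\frac{b^2-1}{(-5+6b,b^2-1)}\right].
\end{equation}
Each of the two arguments of the least common multiple is $b^2-1$ divided by one of its own divisors, hence is itself a divisor of $b^2-1$; and the least common multiple of two divisors of $b^2-1$ is again a divisor of $b^2-1$. Thus $f(b)\mid b^2-1$.

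With this in hand, the coprimality is immediate: any common divisor of $f(b)$ and $30$ must, through $f(b)\mid b^2-1$, also divide $b^2-1$, and hence divides $(b^2-1,30)$, which the hypothesis declares to be $1$. Therefore $(f(b),30)=1$. Both hypotheses of the preceding lemma, namely $(f(b),30)=1$ and $f(b)<b(b^2-1)/(-5+6b)$, are now in force, the second being assumed outright, so that lemma yields $f(b)\in S(b)$ and the existence of a permissible triple for $b$. I do not anticipate any genuine obstacle here; the only point carrying any content is the divisibility $f(b)\mid b^2-1$, and everything else is a clean inheritance from the lemma already proved.
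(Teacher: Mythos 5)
Your proposal is correct and follows the paper's own route exactly: the paper likewise deduces $(f(b),30)=1$ from $(b^2-1,30)=1$ via the divisibility $f(b)\mid b^2-1$ and then invokes the preceding lemma. You merely spell out the (easy) justification of $f(b)\mid b^2-1$, which the paper asserts without proof.
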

We now consider the condition $(b^2-1,30)=1$. It is easily shown that this is equivalent to that both $b\equiv \modd{0} {6}$ and $b\equiv \modd{0,2,3} {5}$. In particular, $b\equiv \modd{0} {30}$ is a sufficient condition. Suppose that $k\geq1$ is an integer, then
\begin{align}
f(30k)&=\left[\frac{(30k)^2-1}{(5(30k)-6,(30k)^2-1)},\frac{(30k)^2-1}{(-5+6(30k),(30k)^2-1)}\right]\\
&=\left[\frac{(30k)^2-1}{(6k-2,11)},\frac{(30k)^2-1}{(5k+2,11)}\right],
\end{align}
where for the second equality we used a property of the greatest common divisor function to simplify. Because of the right inequality in \eqref{eq:conditions}, we want $f(30k)$ to be small. Thus it might be good if we have $(6k-2,11)=(5k+2,11)=11$, which is easily shown to be equivalent to that $k\equiv \modd{4} {11}$. Whence assume that $k\equiv \modd{4} {11}$, then
\begin{equation}
  f(30k)=\frac{(30k)^2-1}{11}.
\end{equation}
On the other hand, the right-hand-side of the right inequality \eqref{eq:conditions} becomes
\begin{equation}
\frac{(30k)((30k)^2-1)}{-5+6(30k)}.
\end{equation}
That $f(30k)$ is strictly less than the above quantity is equivalent to
\begin{equation}
  -5+6(30k)<11(30k),
\end{equation}
which clearly always holds. Hence the above lemma can be further weakened to the following.
\begin{theorem}
Let $k\equiv \modd{4} {11}$ be a positive integer, then
\begin{equation}
  \left(\frac{-6+150k}{11},\frac{-5+180k}{11},\frac{-1+900k^2}{11}\right)
\end{equation}
is a permissible triple for the base $30k$. In particular, the two-digit number
\begin{equation}
\left(\frac{-6+150k}{11},\frac{-5+180k}{11}\right)_{30k}
\end{equation}
is a $v$-palindrome in base $30k$.
\end{theorem}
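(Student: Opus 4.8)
The plan is to specialize the last sufficient-condition lemma—the one asserting that $(b^2-1,30)=1$ together with $f(b)<b(b^2-1)/(-5+6b)$ forces $f(b)\in S(b)$—to the base $b=30k$, and then read off the resulting permissible triple by substituting $t=f(30k)$ into the solution formulas \eqref{eq:frac}. Since the existence of a permissible triple yields a $v$-palindrome via the lemma introducing permissible triples, the ``in particular'' statement will follow at once.

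First I would verify the two hypotheses of that lemma for $b=30k$. The condition $(b^2-1,30)=1$ is immediate, because $30k\equiv 0\pmod{30}$ is exactly the sufficient condition for $(b^2-1,30)=1$ noted just above. For the second hypothesis I would invoke the computation already carried out in the text: under the standing assumption $k\equiv 4\pmod{11}$ one has $(6k-2,11)=(5k+2,11)=11$, hence $f(30k)=((30k)^2-1)/11=(900k^2-1)/11$, and the required strict inequality $f(30k)<b(b^2-1)/(-5+6b)$ was shown to reduce to $-5+6(30k)<11(30k)$, which always holds. Thus both hypotheses are met and $t:=f(30k)=(900k^2-1)/11$ lies in $S(30k)$, so the associated triple $(a,c,t)$ is a permissible triple for $30k$.

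It then remains to compute $a$ and $c$ from \eqref{eq:frac} with $b=30k$ and $t=(900k^2-1)/11$. Here the key simplification is that the denominator $b^2-1=900k^2-1$ in \eqref{eq:frac} cancels exactly against the numerator of $t$: one finds $a=t(5b-6)/(b^2-1)=(-6+150k)/11$ and $c=t(-5+6b)/(b^2-1)=(-5+180k)/11$, matching the stated formulas. The integrality of $a,c$ and the coprimality $(t,30)=1$ need not be rechecked by hand, as both are guaranteed by $t\in S(30k)$: the former is precisely condition \eqref{eq:integers}, and the latter follows from $f(b)\mid b^2-1$ together with $(b^2-1,30)=1$. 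There is no genuine obstacle in this argument; the only point demanding care is the bookkeeping of the cancellation, together with confirming that $11\mid 900k^2-1$ when $k\equiv 4\pmod{11}$ so that $t$ is indeed an integer—a one-line congruence check. The $v$-palindrome claim is then immediate upon applying the lemma on permissible triples to $(a,c,t)$.
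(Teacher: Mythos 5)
Your proposal is correct and follows essentially the same route as the paper: the theorem there is obtained precisely by specializing the final sufficient-condition lemma to $b=30k$ with $k\equiv \modd{4} {11}$, computing $f(30k)=((30k)^2-1)/11$, reducing the inequality to $-5+180k<330k$, and substituting $t=f(30k)$ into \eqref{eq:frac} so that $b^2-1$ cancels. Nothing is missing; your extra check that $11\mid 900k^2-1$ is a sensible sanity check that the paper leaves implicit.
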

Hence we have proved the existence of $v$-palindromes for infinitely many bases, summarized as follows.
\begin{corollary}
  If $b\equiv \modd{120} {330}$ is a positive integer, then there exists a $v$-palindrome in base $b$.
\end{corollary}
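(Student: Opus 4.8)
The plan is to derive the corollary as an immediate specialization of the preceding theorem through the substitution $k=b/30$. That theorem already furnishes a $v$-palindrome in every base of the form $30k$ with $k\equiv\modd{4} {11}$; consequently the whole task reduces to recognizing that the hypothesis $b\equiv\modd{120} {330}$ describes exactly the bases of this shape.

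Concretely, I would first unpack the congruence. Writing $b=120+330m$ for a nonnegative integer $m$ and factoring out $30$ using $120=30\cdot4$ and $330=30\cdot11$, I obtain
\begin{equation}
b=30(4+11m).
\end{equation}
This single identity does all the work: it exhibits $b$ as a positive multiple of $30$, so that $k=b/30=4+11m$ is a positive integer (indeed $k\geq4$, the value $b=120$ being the smallest admissible base), and it simultaneously shows $k\equiv\modd{4} {11}$. Having verified that $k=b/30$ is a positive integer congruent to $4$ modulo $11$, I would then apply the preceding theorem verbatim to conclude that
\begin{equation}
\left(\frac{-6+150k}{11},\frac{-5+180k}{11}\right)_{30k}
\end{equation}
is a $v$-palindrome in base $30k=b$, which is the assertion of the corollary.

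I expect no real obstacle here: the corollary is essentially a repackaging of the theorem, and all of the substantive arithmetic --- the integrality and ordering of the entries $a$ and $c$, the coprimality and size bounds making the triple permissible, and the underlying fact $v(5t)=v(6t)$ for $(t,30)=1$ --- has already been discharged in proving the theorem. The only step demanding any attention is the elementary congruence bookkeeping converting $b\equiv\modd{120} {330}$ into $b=30k$ with $k\equiv\modd{4} {11}$; and even there I need only the one direction, namely that every base $b\equiv\modd{120} {330}$ arises in this way, so no converse or uniqueness claim enters the argument.
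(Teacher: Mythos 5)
Your proposal is correct and matches the paper's (implicit) argument exactly: the corollary is just the preceding theorem restated via the substitution $b=120+330m=30(4+11m)$, so that $k=4+11m$ is a positive integer with $k\equiv \modd{4} {11}$. Nothing further is needed.
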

Hence in particular there is a positive density of bases $b\geq2$ for which a $v$-palindrome exists.

\section{Existence of \texorpdfstring{$v$}-palindromes for all bases?}

While the previous sections showed that $v$-palindromes exist for all bases $b\equiv \modd{120} {330}$, the goal would be to show for all bases $b\geq2$. The proof in the previous section is based on the equality $v(5)=v(6)$. It is conceivable that the same method basing on other common values of $v$ will find other bases $b$ for which a $v$-palindrome exists. For instance, we have
\begin{gather}
v(5)=v(6)=v(8)=v(9),\\
v(7)=v(10)=v(12)=v(18). 
\end{gather}
Perhaps exploiting this method will lead to resolving the existence of $v$-palindromes for all bases. We give the table at the end of the paper of the smallest $v$-palindrome, i.e., $\min(\mathbb{V}_b)$, for the first few bases, calculated using PARI/GP \cite{pari}.

\section{Some conjectures.}

In the short note \cite{tsai0}, three conjectures on $v$-palindromes has been proposed by commentators. They were conjectured for base ten, but the general conjecture for a general base is easily conceived for two of them. We state those two conjectures as follows for a general base.

\begin{conjecture}
Let $b\geq2$ be an integer. There does not exist a prime $v$-palindrome in base $b$.
\end{conjecture}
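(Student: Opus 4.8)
The plan is to argue by contradiction. Suppose $p$ is a prime that is a $v$-palindrome in base $b$, and set $m=r_b(p)$. Since $p$ is prime, $v(p)=p$, so condition (iii) becomes $v(m)=p$. A one-digit number is its own reversal, so condition (ii) forces $p$, and hence $m$, to have some number $L\ge 2$ of base $b$ digits; then $b^{L-1}\le p,m<b^{L}$, and in particular $m<bp$. Furthermore $m$ is composite: it is not $1$ (since $m\ge b^{L-1}\ge 2$) and it is not prime (otherwise $v(m)=m=p$ would contradict $m\ne p$). So the problem reduces to showing that no composite $m$ can satisfy both $v(m)=p$ with $p$ prime and $m=r_b(p)$.

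The next step is to bound how large $v$ can be. Writing $n=\prod_i p_i^{e_i}$ and putting $\beta(n)=\sum_i e_ip_i$ for the sum of prime factors with multiplicity, the elementary inequality $p_i+e_i\le e_ip_i$ for $p_i,e_i\ge 2$ gives $v(n)\le\beta(n)$, and the classical bound $\beta(n)\le n$ yields $v(n)\le n$. Combining $p=v(m)>m/b$ (from $p\ge b^{L-1}$ and $m<b^{L}$) with $v(m)\le\beta(m)\le P(m)\,\Omega(m)$, where $P(m)$ is the largest prime factor of $m$ and $\Omega(m)$ the number of prime factors counted with multiplicity, forces $P(m)>m/(b\,\Omega(m))$. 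Thus $m$ has one dominant prime factor and splits as $m=kq$ with $q=P(m)$ prime and cofactor $k=m/q<b\,\Omega(m)$. For large $m$ this also forces $q$ to divide $m$ exactly once, so $(k,q)=1$ and additivity gives $q=p-v(k)$, whence
\begin{equation}
r_b(p)=m=k\bigl(p-v(k)\bigr).
\end{equation}
The inequality $m<bp$ then bounds the cofactor by essentially $k\le b$, and $k=1$ is ruled out because it returns the excluded palindrome case $m=p$.

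Thus the entire question collapses to a finite family, indexed by $k\in\{2,3,\dots,b\}$, of affine reversal relations $r_b(p)=kp-k\,v(k)$ that a prime would have to satisfy. Here I would bring in the fact that digit reversal preserves the base $b$ digit sum, so $p\equiv r_b(p)\pmod{b-1}$; substituting the relation turns this into the congruence $(k-1)p\equiv k\,v(k)\pmod{b-1}$, which should eliminate many pairs $(b,k)$ at once. For each surviving $k$ one would exploit that $p\mapsto r_b(p)$ is linear in the digit vector $(a_0,\dots,a_{L-1})$: the relation $r_b(p)=kp-k\,v(k)$ becomes a linear constraint on the digits, cutting them down to an explicit family, within which one then tries to contradict the primality of $p$ (and of $q=p-v(k)$).

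The step I expect to be the genuine obstacle is precisely this surviving case $m=kq$ with a large prime factor, because it is exactly the regime in which $v(m)$ is almost as large as $m$; for instance $k=2$ gives $m=2q$ and $v(m)=q+2=m/2+2$, so the crude size estimates that annihilate all \emph{smooth} $m$ yield no contradiction whatsoever here. What remains is the arithmetically delicate assertion that the reversal of a prime can never equal $kp-k\,v(k)$, and since the reversal map scrambles the base $b$ digits in a manner that seems uncorrelated with the multiplicative demand that $p-v(k)$ be prime, I do not expect a purely elementary size-or-congruence argument to finish it--- consistent with the statement being offered here only as a conjecture.
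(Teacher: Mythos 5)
The first thing to note is that the paper offers no proof of this statement: it appears only as a conjecture (one of those ``proposed by commentators'' on \cite{tsai0}), so there is no argument of the author's to compare yours against. Judged on its own terms, your reduction is essentially sound as far as it goes: $v(p)=p$ for $p$ prime; conditions (i) and (ii) force $m=r_b(p)$ to have the same number $L\ge2$ of digits as $p$, whence $m<bp$; $m$ must be composite; and the chain $m/b<p=v(m)\le\beta(m)\le P(m)\,\Omega(m)$ does force $m=kq$ with $q=P(m)$ prime, $q$ dividing $m$ exactly once (for $m$ large), and the cofactor $k$ bounded in terms of $b$. The congruence $(k-1)p\equiv k\,v(k)\pmod{b-1}$ coming from digit-sum invariance is also correct.

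But, as you concede in your final paragraph, the argument stops exactly where the conjecture begins. The surviving cases $r_b(p)=kp-k\,v(k)$ with $2\le k\le b$ --- already $k=2$, i.e.\ $r_b(p)=2p-4$ with $(r_b(p))/2$ prime --- are untouched by every size and congruence estimate you deploy, and no mechanism is proposed that could rule them out; this residual family is the entire content of the conjecture. So what you have is a correct and potentially useful reduction to a more structured open statement, not a proof, and the statement remains open both in the paper and after your work. Two smaller points to tidy if you develop this: the ``for large $m$'' steps leave a finite but unspecified set of small candidates to be checked separately for each base $b$, and the passage from $k<b\,\Omega(m)$ to $k\le b$ needs the explicit inequality $k<b+k\,v(k)/p$ together with a genuine bound on $k\,v(k)$ relative to $p$, rather than the word ``essentially''.
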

\begin{conjecture}
Let $b\geq2$ be an integer. There are infinitely many $v$-palindromes $n$ in base $b$ such that both $n$ and $r_b(n)$ are squarefree.
\end{conjecture}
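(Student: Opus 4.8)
The plan is to manufacture the desired $v$-palindromes from a single \emph{squarefree seed} by repeated concatenation, leaning on the additivity of $v$ exactly as in \S\ref{sec:main}. Suppose we have found one $v$-palindrome $m$ in base $b$, with $L$ base-$b$ digits, for which \emph{both} $m$ and $r_b(m)$ are squarefree; call this a squarefree seed. For $k\ge1$ write
\begin{equation}
m(k)_b=m\cdot R_k,\qquad r_b\big(m(k)_b\big)=r_b(m)\cdot R_k,\qquad R_k=\frac{b^{Lk}-1}{b^L-1},
\end{equation}
the two identities following from \eqref{wd} and the fact that reversing $k$ copies of the digit string of $m$ yields $k$ copies of the digit string of $r_b(m)$ (here $b\nmid m$ guarantees that $r_b(m)$ again has $L$ digits). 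First I would isolate the claim that $m(k)_b$ is a squarefree $v$-palindrome whenever $R_k$ is squarefree and coprime to $m\,r_b(m)$.

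Indeed, if $\gcd(R_k,m\,r_b(m))=1$ then additivity gives $v(m(k)_b)=v(m)+v(R_k)$ and $v(r_b(m(k)_b))=v(r_b(m))+v(R_k)$, which are equal because $v(m)=v(r_b(m))$; the remaining conditions $b\nmid m(k)_b$ and $m(k)_b\ne r_b(m(k)_b)$ are inherited from the seed. Moreover, since $m$ and $r_b(m)$ are squarefree and coprime to $R_k$, the products $m\,R_k$ and $r_b(m)\,R_k$ are squarefree exactly when $R_k$ is. Thus the whole construction turns on a single arithmetic statement: that $R_k$ is squarefree and coprime to the fixed integer $m\,r_b(m)$ for infinitely many $k$. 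Each such $k$ produces a genuinely new $v$-palindrome, in harmony with Theorem \ref{thm:inf}.

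Two things must still be supplied: a squarefree seed, and the infinitude of squarefree $R_k$. For bases $b\equiv\modd{0} {30}$ the machinery of \S\ref{sec:main} already produces $v$-palindromes of the shape $5t$, $6t$ with $(t,30)=1$, and since $5t$ and $6t$ are then products of pairwise coprime factors they are squarefree precisely when $t$ is; so any such base carrying a two-digit $v$-palindrome with squarefree parameter $t$ supplies a seed immediately. For base ten the simple $5/6$ recipe is unavailable --- a congruence mod $3$ forces $3\mid t$ in any solution of $6n=5\,r_{10}(n)$, violating $(t,30)=1$ --- but an explicit squarefree seed is readily found by direct computation, and such seeds appear to exist in every base (cf.\ sequence A338039).

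The genuine obstacle is the second requirement. Observe that $R_k=(b^{Lk}-1)/(b^L-1)$ is precisely the repunit $\underbrace{1\cdots1}_{k}$ written in base $b^L$, so we are asking whether a fixed-base repunit sequence assumes squarefree values infinitely often (after discarding the few residue classes of $k$ on which $R_k$ shares a prime with $m\,r_b(m)$). This is a well-known open problem: even for ordinary base-ten repunits $(10^k-1)/9$ it is unknown whether infinitely many are squarefree, the difficulty being to rule out a persistent square factor $p^2\mid R_k$ along the thin, exponentially growing sequence. I therefore do not expect an unconditional proof along these lines; what the plan delivers is a clean reduction of the conjecture to a recognized hard problem on squarefree values of linear recurrences, which both isolates the true difficulty and accounts for the conjectural status. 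The same scheme applied to the other squarefree-preserving coincidences of $v$, such as $v(7)=v(10)$, leads back to the identical obstruction.
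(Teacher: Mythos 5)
The statement you were given is one of the paper's \emph{conjectures}: the paper offers no proof of it, so there is nothing of the author's to compare your argument against --- and your proposal is not a proof either, as you yourself concede in the final paragraph. Two essential inputs are missing. First, the ``squarefree seed'': your construction needs, for every base $b\geq2$, at least one $v$-palindrome $m$ with both $m$ and $r_b(m)$ squarefree. The paper only establishes the existence of \emph{any} $v$-palindrome for $b\equiv\modd{120} {330}$ (plus the small bases in its table) and explicitly poses existence for general $b$ as an open question, so the starting point of your argument is unavailable unconditionally; ``such seeds appear to exist in every base'' is an empirical remark, not a lemma. Second, and more seriously, the step you correctly identify as the crux --- that $R_k=(b^{Lk}-1)/(b^L-1)$ is squarefree for infinitely many $k$ --- is itself a recognized open problem on squarefree values of exponential sequences; the coprimality requirement $(R_k,m\,r_b(m))=1$ is the easy half (restrict $k$ to a suitable arithmetic progression, using $R_1=1$ and periodicity of $R_k$ modulo each prime divisor of $m\,r_b(m)$), but the squarefreeness half is not known for a single pair $(b,L)$. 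Replacing one conjecture by another conjecture, plus an unproved existence hypothesis, does not establish the statement.

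That said, the mechanical parts of your scheme are sound and in the spirit of \S\ref{sec:aperiodic} and \S\ref{sec:main}: the identities $m(k)_b=mR_k$ and $r_b(m(k)_b)=r_b(m)\,R_k$ (valid because $b\nmid m$, so $r_b(m)$ has the full $L$ digits), the use of additivity of $v$ to get $v(m(k)_b)=v(r_b(m(k)_b))$ whenever $(R_k,m\,r_b(m))=1$, the inheritance of conditions (i) and (ii) from the seed, and the reduction of squarefreeness of $mR_k$ and $r_b(m)\,R_k$ to that of $R_k$ alone, are all correct. So what you have is a clean conditional reduction that isolates where the difficulty lies --- a reasonable thing to record about a conjecture --- but it is not a proof of it.
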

While \cite{pong} provides an exact formula for the number of palindromes up to a given positive integer, the same can be considered for $v$-palindromes, namely the following.
\begin{problem}
Let $b\geq2$ be an integer. Is there a formula for the number of $v$-palindromes in base $b$ up to a given positive integer? If not, how can it be approximated?
\end{problem}
From $199$ till $575$ are $377$ consecutive positive integers each not a $v$-palindrome in base ten. Just as consecutive composite numbers can be aribitrarily long, we make the following conjecture.
\begin{conjecture}
Let $b\geq2$ be an integer. Consecutive positive integers each not a $v$-palindrome in base $b$ can be arbitrarily long.
\end{conjecture}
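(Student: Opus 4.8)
The plan is to deduce the conjecture from a purely quantitative density statement. Fix $b$ and write $V(N)=\#\{n\le N:n\in\mathbb{V}_b\}$. Suppose the conjecture fails: then there is a constant $k$ such that no run of $k$ consecutive positive integers consists entirely of non-$v$-palindromes, i.e.\ every block of $k$ consecutive integers contains at least one element of $\mathbb{V}_b$. Partitioning $[1,N]$ into $\lfloor N/k\rfloor$ such blocks gives $V(N)\ge N/k-1$, whence $\liminf_{N\to\infty}V(N)/N\ge 1/k>0$. Thus it suffices to prove that $\mathbb{V}_b$ has lower density zero, and it would be cleaner (and, from the data, expected) to establish $V(N)=o(N)$ outright, which would simultaneously speak to the Problem posed above. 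In short, the whole conjecture reduces to bounding the number of $n\le N$ with $b\nmid n$ and $v(n)=v(r_b(n))$ by $o(N)$.

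To attack that count I would exploit that $v$ is dominated by the largest prime factor $P(n)$. Since a candidate satisfies $b\nmid n$, its last base $b$ digit is nonzero, so when $n$ has $L$ digits both $n$ and $r_b(n)$ lie in $[b^{L-1},b^L)$. The most transparent range is that of the positive proportion of $n$ with $P(n)>\sqrt{n}$: there $P(n)$ occurs to the first power and $v(n)=P(n)+v\!\left(n/P(n)\right)$ with the second term of size $O(\sqrt{n})$, so the equation $v(n)=v(r_b(n))$ pins $P(n)$ and $P(r_b(n))$ to within $O(\sqrt{n})$ of each other. Summing over the admissible large primes, using standard estimates for the count of integers below $N$ with a prescribed large prime factor, should bound the number of such $n$ by $o(N)$; the complementary, smooth range is handled by observing that a match forces $v(r_b(n))$, and hence $r_b(n)$, to be correspondingly smooth as well, so the same shell-by-shell comparison of largest prime factors applies to both variables.

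The hard part will be the joint control of the multiplicative structure of $n$ and of its digit reversal $r_b(n)$. The map $r_b$ respects nothing arithmetic: as $n$ runs through an interval, $r_b(n)$ jumps by steps of order $b^{L-1}$, and there is no known handle relating the prime factorization of $n$ to that of $r_b(n)$. Heuristically $P(n)$ and $P(r_b(n))$ behave like independent samples from the Dickman distribution, which makes the matching $v(n)=v(r_b(n))$ rare but very difficult to preclude rigorously for a genuinely positive proportion of $n$; converting this independence heuristic into a theorem is exactly the crux, and it is the same obstacle that keeps the density-zero statement and the counting Problem open. A more elementary but still sufficient variant would be to manufacture, via the Chinese Remainder Theorem, long blocks in which each $n$ is forced to carry a prescribed large prime factor while the relevant reversals are confined to a sparse set; but since congruence conditions constrain $n$ and not $r_b(n)$, this construction collides with the very same lack of structure. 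For this reason I expect the density route, powered by a genuine equidistribution input for the largest prime factors of reversals, to be where the real difficulty lies.
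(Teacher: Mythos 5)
This statement is presented in the paper as an open conjecture; the paper gives no proof of it, so there is nothing to compare your argument against except the bare claim. What you have written is not a proof either, and you essentially say so yourself. The one complete step is the opening reduction: if no run of $k$ consecutive non-$v$-palindromes exists, then every block of $k$ consecutive integers contains an element of $\mathbb{V}_b$, so $V(N)\ge N/k-1$ and $\mathbb{V}_b$ has positive lower density; hence the conjecture would follow from the density-zero statement $V(N)=o(N)$. That reduction is correct, and it is a sensible way to connect the conjecture to the counting Problem stated just before it. But it replaces one open problem with another that is at least as hard, and everything after that point is heuristic.

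The genuine gap is the density bound itself. Your proposed route requires comparing $v(n)$ with $v(r_b(n))$, and already in the ``easy'' range $P(n)>\sqrt{n}$ the conclusion you draw (that $P(n)$ and $P(r_b(n))$ are forced within $O(\sqrt{n})$ of each other) presupposes that $r_b(n)$ also has a large prime factor; when it does not, the equation instead forces $P(n)=v(r_b(n))+O(\sqrt{n})$, a constraint of a different shape that still couples the two factorizations. In every case one needs quantitative control of the joint distribution of the multiplicative structures of $n$ and $r_b(n)$, and, as you acknowledge, no such tool exists: the digit-reversal map has no known compatibility with prime factorization, and the ``independence'' of $P(n)$ and $P(r_b(n))$ is exactly the unproved input. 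The alternative CRT construction you mention fails for the reason you identify, namely that congruence conditions on $n$ do not translate into conditions on $r_b(n)$. So the proposal is a reasonable research plan, but the conjecture remains unproved by it; the only rigorously established ingredient is the pigeonhole reduction in the first paragraph.
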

Just as is the case for any special kind of number, the possible questions which can be asked about $v$-palindromes is uncountable.

\subsection*{Acknowledgements}
The author wish to thank his advisor Professor Kohji Matsumoto, whose initial encouragement led to the work described in this article.

\bibliography{refs}{}
\bibliographystyle{plain}

\begin{table}[ht]
 \caption{The smallest $v$-palindrome for bases $b\leq 19$.}
 \label{table:indicatorfun}
 \centering
  \begin{tabular}{lll}
   \hline
   $b$ & $\text{$\min(\mathbb{V}_b)$ written in base $10$}$ & $\text{$\min(\mathbb{V}_b)$ written in base $b$}$ \\
   \hline \hline
   $2$ & $175$ & $1,0,1,0,1,1,1,1$ \\
   $3$ & $1280$ & $1,2,0,2,1,0,2$ \\
   $4$ & $6$ & $1,2$ \\
   $5$ & $288$ & $2,1,2,3$ \\
   $6$ & $10$ & $1,4$ \\
   $7$ & $731$ &  $2,0,6,3$\\
   $8$ & $14$ & $1,6$ \\
   $9$ & $93$ & $1,1,3$ \\
   $10$ & $18$ & $1,8$ \\
   $11$ & $135$ & $1,1,3$ \\

$12$ & $22$ & $1,10$\\
$13$ & $63$ & $4,11$\\

$14$ & $26$ & $1,12$\\
 $15$ & $291$ & $1,4,6$\\
 $16$ & $109$ & $6,13$\\
 $17$ & $581$ & $2,0,3$\\
 $18$ & $34$ & $1,16$\\
 $19$ & $144$ & $7,11$ \\
   \hline
  \end{tabular}
\end{table}

\end{document}